\documentclass{amsart}

\usepackage{verbatim}
\usepackage{graphicx}
\usepackage{amsfonts,amsmath,latexsym,amssymb,amsthm}
\usepackage[T1]{fontenc} 
\usepackage{geometry}
\newcounter{lemmacounter}

\newcounter{dummycounter}
\newcounter{propcounter}

\newcounter{emptycounter}

\newtheorem{lemma}[lemmacounter]{Lemma}

\newtheorem{proposition}[propcounter]{Proposition}

\newcounter{eqncounter}

\numberwithin{equation}{eqncounter}

\newcommand{\Oseen}{\mathcal{O}}

\newcommand{\IF}{\mathbb{F}}
\newcommand{\IC}{\mathbb{C}}
\newcommand{\IZ}{\mathbb{Z}}
\newcommand{\IN}{\mathbb{N}}
\newcommand{\IQ}{\mathbb{Q}}

\newcommand{\ord}{\text{\rm ord}}

\newcommand{\D}{D_K}
\newcommand{\del}{\eta(K)}

\newcommand{\aaa}{\mathfrak{a}}

\DeclareMathOperator{\Cl}{Cl}

\usepackage{color}
\definecolor{dblackcolor}{rgb}{0.0,0.0,0.0}
\definecolor{dbluecolor}{rgb}{0.01,0.02,0.7}
\definecolor{dgreencolor}{rgb}{0.2,0.4,0.0}
\definecolor{dgraycolor}{rgb}{0.30,0.3,0.30}

\title{A short remark on the $\ell$-torsion part of class groups}

\author{Martin Widmer}
\address{TU Graz\\
Institute of Analysis and Number Theory\\
Steyrergasse 30/II, 8010 Graz\\
Austria}
\email{martin.widmer@tugraz.at}

\begin{document}

\subjclass[2010]{Primary 11R29   Secondary 11G50}

\keywords{Class groups, torsion, number fields, Weil height, pure extensions, upper bound, primitive elements, generators, small height}

\maketitle

\begin{abstract}
In a 2008 paper Ellenberg suggested a  strategy to improve the known upper bounds for the $\ell$-torsion part of class groups of number fields of fixed degree $d$. Motivated by this he proposed a question about the number of primitive elements of small height in a number field. Here we answer Ellenberg's question. 
We also improve Heath-Brown's bound for the $\ell$-torsion part
of class groups of purely cubic number fields, and we generalize our improvement to  pure fields of arbitrary odd degree $d$.  
\end{abstract}

\section{Introduction}
Let $K$ be a number field of degree $d>1$ and denote by  $\D$ the absolute value of its discriminant. 
For  arbitrary $\ell\in\IN=\{1,2,3,\ldots\}$  let $\Cl_K[\ell]=\{[\aaa]\in\Cl_K\: ;\: [\aaa]^\ell=[\Oseen_K]\}$ be the $\ell$-torsion of the ideal class group of the number field $K$.
The most important tool to upper bound  $\#\Cl_K[\ell]$ in terms of $\D,d,\ell$ is Ellenberg and Venkatesh's ``Key-Lemma'' \cite[Lemma 2.3]{EllVentorclass} which yields
\begin{alignat}1\label{ineq:EV} 
 \#\Cl_K[\ell] \ll_{d,\ell,\varepsilon} \D^{1/2-1/(2\ell(d-1))+\varepsilon},
 \end{alignat}
provided there are sufficiently many small suitable (e.g., splitting) primes, which is guaranteed under GRH. In a subsequent paper Ellenberg \cite{Elllowheight} pointed out that
the proof actually yields the stronger conclusion 
$$\#\Cl_K[\ell] \ll_{d,\ell,\varepsilon} \D^{1/2-f(\ell,d)+\varepsilon}$$
for a certain function $f(\ell,d)\geq 1/(2\ell(d-1))$ defined in (\ref{def:f}). Ellenberg proposes to study this function, 
more specifically he wrote:
{\it ``But at present it is not at all clear how to bound $M(K,\ell)$ or $f(\ell, d)$. As far as we
know it might be possible for $N_K'(X)$ to start growing quite quickly once it becomes
nonzero; in this case we would have $f(\ell,d) = 1/
2\ell(d-1)$ and no improvement would
be made on the results of \cite{EllVentorclass}. In fact, Lecoanet \cite{Lecoanet} has carried out experiments
for several dozen cubic fields $K$ which seem to show just this kind of behavior. It
would be very interesting to understand more fully the situation for cubic fields.''}\\

In this short note we compute  $f(\ell,3)$ and more generally  $f(\ell,d)$ whenever $\ell\geq d/2$. 

\begin{proposition}\label{prop:fdl}
Let $d>1$ and $\ell\geq d/2$ be integers. Then $f(\ell,d)=1/(2\ell(d-1))$.
\end{proposition}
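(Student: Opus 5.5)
The inequality $f(\ell,d)\geq 1/(2\ell(d-1))$ is already recorded in Ellenberg's remark, since it is exactly the Ellenberg–Venkatesh exponent (\ref{ineq:EV}). So the plan is to prove the reverse inequality $f(\ell,d)\leq 1/(2\ell(d-1))$ for $\ell\geq d/2$.

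For this I will unwind the definition (\ref{def:f}). As I understand it, $f(\ell,d)$ is governed, over fields $K$ of degree $d$, by the point $M(K,\ell)$ where the counting function $N_K'(X)$ of primitive elements of height at most $X$ becomes large. The threshold is measured against the natural scale $X\approx \D^{1/(2\ell(d-1))}$ coming from the Key Lemma, and the exponent improves only if $N_K'$ stays small well beyond that scale. To force $f(\ell,d)=1/(2\ell(d-1))$, it therefore suffices to produce an infinite family of degree-$d$ fields $K$ with $\D\to\infty$ and
$$N_K'\bigl(\D^{1/(2\ell(d-1))}\bigr)\gg \D^{\delta}$$
for some $\delta>0$, or whatever growth rate (\ref{def:f}) requires. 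In such a family the primitive elements are already plentiful at the Ellenberg–Venkatesh scale, so no gain is possible.

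The family I would use is the pure fields $K=\IQ(\alpha)$ with $\alpha^d=p$, for $p$ prime, with $x^d-p$ irreducible by Eisenstein. Here:
\begin{itemize}
\item $H(\alpha)=p^{1/d}$;
\item $\D\asymp_d p^{d-1}$, since only primes dividing $dp$ ramify and $p$ is totally ramified;
\item hence $H(\alpha)\asymp \D^{1/(d(d-1))}$, the smallest possible order by Silverman's lower bound.
\end{itemize}
The hypothesis $\ell\geq d/2$ is exactly the statement $1/(d(d-1))\leq 1/(2\ell(d-1))$. So the generator $\alpha$ lies below the critical scale, and I then need many primitive elements there. I would take the elements $\beta=(a+\alpha)/b$, or more generally $a_0+a_1\alpha$ with small integers $a_i$. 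Each of these generates $K$, because $a_1\neq 0$ and $\alpha$ has degree $d$. Their heights are bounded by $\ll H(\alpha)\max|a_i|$, using $H(x+y)\leq 2H(x)H(y)$ and $H(xy)\leq H(x)H(y)$. Letting $a_0,a_1$ range over integers up to $T=X/(cH(\alpha))$ gives $N_K'(X)\gg T^2$, with a count of $T^2$ distinct elements. At $X=\D^{1/(2\ell(d-1))}$ this yields a power of $\D$ whenever the inequality $\ell\geq d/2$ is strict.

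The main obstacle is the boundary case $\ell=d/2$, where $H(\alpha)$ sits exactly at the critical scale up to constants. There I would check against the precise form of (\ref{def:f}) whether an $\varepsilon$-room in the exponent, i.e. $X=\D^{1/(2\ell(d-1))+\varepsilon}$, is permitted. If it is, the counting argument above still gives $N_K'(X)\gg \D^{c\varepsilon}$, which suffices. A secondary point is to match exactly which quantity (\ref{def:f}) optimizes over (infimum over $K$, $\limsup$, etc.), and to confirm that one infinite family suffices for that notion; I expect it does, since $f$ should be defined as a worst case over fields of degree $d$.
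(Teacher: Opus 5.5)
\textbf{Verdict: genuine gap.} Your skeleton is the right one: take one small generator $\alpha$ and add $\asymp T^2$ cheap perturbations of height $\ll T^dH_K(\alpha)$. But the family you chose provably cannot work, and the reduction is misstated.

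\emph{The reduction.} Since $f(\ell,d)$ is a $\liminf$ of $-\log M_{K,\ell}/\log D_K$, you need, along an infinite family, the \emph{lower} bound $M_{K,\ell}\ge D_K^{-1/(2\ell(d-1))-o(1)}$. That is a lower bound for $X^{-1/\ell}(1+N_K'(X))$ for \emph{every} $X$; having many primitive elements at one scale does not bound an infimum from below. Your scale is also wrong. In the relative height $H_K$ that defines $N_K'$, your condition at $X=D_K^{1/(2\ell(d-1))}$ is even unsatisfiable for $\ell>1$ by Silverman's bound. That exponent is the prime-norm scale $X^{1/\ell}$; the height scale is $D_K^{1/(2(d-1))}$.

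Taking $X=\eta(K)$ gives $M_{K,\ell}\le\eta(K)^{-1/\ell}$, so the family \emph{must} satisfy $\eta(K)\le D_K^{1/(2(d-1))+o(1)}$. Conversely, fix any generator $\alpha$. Either the elements $\beta\alpha$ with $\beta\in\IQ^\times$, $H_{\IQ}(\beta)<T$ (the paper's Lemma~\ref{lemma:count}), or your $a_0+a_1\alpha$, give $N_K'(X)\gg(X/H_K(\alpha))^{2/d}$ for $X>H_K(\alpha)$. For $a_0+a_1\alpha$ you need $H_K(a_0+a_1\alpha)\le(2T)^dH_K(\alpha)$, which follows from a place-by-place estimate. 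The inequalities you cite only give $\ll T^{2d}H_K(\alpha)$, which would require $\ell\ge d$. Hence $M_{K,\ell}\gg H_K(\alpha)^{-1/\ell}$, precisely because $2/d\ge 1/\ell$. That is the only role of $\ell\ge d/2$, and $\ell=d/2$ is no boundary problem.

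\emph{The family.} For $K=\IQ(p^{1/d})$ one has $H_K(p^{1/d})=p\asymp_d D_K^{1/(d-1)}$. Your value $p^{1/d}$ is the absolute height, in which Silverman's bound reads $D_K^{1/(2d(d-1))}$, so you dropped a factor $1/2$ in the exponent. Your claim that $\ell\ge d/2$ is equivalent to $1/(d(d-1))\le 1/(2\ell(d-1))$ is also backwards.

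No other element rescues these fields. For odd $d$, Proposition~\ref{prop:Dub} with $a=p$ gives $\eta(K)\gg_d p^{(d+1)/(2d)}\gg_d D_K^{(d+1)/(2d(d-1))}$. So on this family $-\log M_{K,\ell}/\log D_K\ge \frac{d+1}{2\ell d(d-1)}-o(1)>\frac{1}{2\ell(d-1)}$.

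You need fields with an explicit generator at Silverman's scale, as the paper uses. Take $\alpha=(p/q)^{1/d}$ with primes $q<p\le 2q$, i.e.\ $a=pq^{d-1}$; Bertrand's postulate gives infinitely many such pairs. The minimal polynomial $qx^d-p$ has Mahler measure $p\le\sqrt{2pq}\le\sqrt{2}\,D_K^{1/(2(d-1))}$, since $p$ and $q$ are totally ramified and so $D_K\ge(pq)^{d-1}$.
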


Therefore, at least for $\ell\geq d/2$, a direct implementation of Ellenberg's idea does not work.
However, the topic has much evolved since 2008. The Key-Lemma has seen various refinements 
(\cite[Proposition 2.1]{Heath-BrownPierce}, \cite[Proposition 2.1]{doi:10.1007/s00208-020-02121-2}, and
\cite[Theorem 3.3]{KoymansThorner24}), leading to more suitable quantities than the above $N_K'(X)$.
For some of these refined quantities  sufficiently good upper bounds can be established, so that 
an adapted version of Ellenberg's idea does work, see, e.g., Chan and Koymans' new bound \cite[Theorem 1.1]{ChanKoymans25} on the $3$-torsion part of the class group of quadratic number fields.\\

There are rather few other instances where the hypothesis of the Key-Lemma can be established unconditionally, hence giving  unconditional pointwise upper bounds for the $\ell$-torsion.
Wang \cite{MR4237969} handled non-cyclic elementary abelian
extensions (and later extended her result to include many additional nilpotent extensions \cite{Wang2020}).  
Heath-Brown \cite[Theorem 10 and 11]{HB25} established (\ref{ineq:EV}) for quadratic and cubic fields with smooth discriminant. 
Assuming the minimal polynomial of a generator of $K/\IQ$ has a particular shape also allows for pointwise bounds.
Heath-Brown \cite[Theorem 1]{HB25} pointed out that (\ref{ineq:EV}) holds for pure cubic number fields, i.e., 
one has the unconditional upper bound 
\begin{alignat}1\label{ineq:HB}
\#\Cl_K[\ell]\ll_{\varepsilon,\ell}D_K^{1/2-1/(4\ell)+\varepsilon}
\end{alignat}
for $\ell$ prime and every $K=\IQ(\sqrt[3]{a})$ where $a>1$ is a cube-free integer. 
Using ideas of Dubickas we can sharpen (\ref{ineq:HB}).
\begin{proposition}\label{prop:HBD}
Let $K=\IQ(a^{1/d})$ be a pure cubic field, and $a=A_1A_2^2$ with positive integers $A_1,A_2$ both squarefree and coprime. 
Then we have
\begin{alignat*}1
\#\Cl_K[\ell]\ll_{\varepsilon,\ell}D_K^{1/2-1/(3\ell)+\varepsilon}A_2^{1/(3\ell)}
\end{alignat*}
for any $\varepsilon>0$, and any positive integer $\ell$.
\end{proposition}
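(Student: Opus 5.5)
We plan to apply the Ellenberg--Venkatesh Key-Lemma \cite[Lemma 2.3]{EllVentorclass}, in the form: if there are $M$ rational primes $p\le \D^{\delta}$, with $\delta<1/(2\ell(d-1))$, each split (or at least having a degree-one prime factor not dividing the index of $\Oseen_K$) in $K$, then
$$\#\Cl_K[\ell]\ll_{d,\ell,\varepsilon}\D^{1/2+\varepsilon}M^{-1}.$$
The refinement comes from lowering the threshold: the obstruction in the Key-Lemma is that a product of small degree-one primes might be principal, generated by an element of small height. Thus the admissible range for $\delta$ is governed by the smallest height of an element $\alpha\in\Oseen_K\setminus\IZ$ generating $K$. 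Here we take $d=3$.

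The first step follows Dubickas: we show that every irrational $\alpha\in K=\IQ(a^{1/3})$ has height bounded below in terms of $a$, not merely in terms of $\D$. Writing $\theta=a^{1/3}$, the basis $1,\theta,\theta^2/A_2$ is relevant, as $\theta^2/A_2=(A_1^2A_2)^{1/3}$. The three conjugates of $\alpha=x+y\theta+z\theta^2/A_2$ differ by multiplication of the $\theta$ and $\theta^2$ parts by cube roots of unity. Hence one gets differences like $\alpha-\alpha'$ from which $|y|a^{1/3}$ or $|z|(A_1^2A_2)^{1/3}$ can be recovered via a Vandermonde-type inversion. This gives $\max_\sigma|\sigma\alpha|\gg (A_1^2A_2)^{1/3}$ or $\gg (A_1A_2^2)^{1/3}$, and hence
$$H(\alpha)^{3}\gg (A_1^2A_2)^{1/3}\ge (A_1A_2)^{1/3}\cdot A_1^{1/3}$$
in the worst case. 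Since $\D\asymp (A_1A_2)^2$ up to a factor of $3$, this gives $H(\alpha)\gg \D^{1/18}A_2^{-c}$ for an appropriate exponent $c$. Tracking the worst of the two cases should lead exactly to the stated dependence on $A_2$.

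In the second step we insert this into the Key-Lemma argument. If $\prod p_i^{\ell}$, with all $p_i\le \D^{\delta}$, is principal with generator $\beta$ of norm $\le \D^{\ell\delta\cdot(\text{count})}$, then the choice of a single prime suffices. A degree-one prime $\mathfrak p$ of norm $p\le \D^\delta$ with $\mathfrak p^\ell=(\beta)$ gives $\beta\notin\IQ$, since $\mathfrak p$ is not Galois-stable in the non-Galois cubic field or it is split. Moreover, after multiplying by units, $H(\beta)\ll p^{\ell/3}$ by the standard reduction: one chooses a generator with all archimedean absolute values comparable. The height lower bound then forces $p^{\ell/3}\gg \D^{1/6}(\cdot)$, so primes below $\D^{1/(2\ell)}A_2^{-c'}$ cannot be principal-$\ell$-th-power related. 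Distinct primes give distinct classes in $\Cl_K/\ell\Cl_K$ by the same argument applied to quotients $\mathfrak p_i^\ell\mathfrak p_j^{-\ell}$. So we may take $\delta$ up to $\approx 1/(3\ell)\cdot$ on the $\log\D$ scale, adjusted by $A_2$.

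The final step, and the main obstacle, is producing $M\gg \D^{\delta-\varepsilon}$ small primes with a degree-one factor, unconditionally. For pure cubic fields this is elementary: $p\equiv 2\pmod 3$ has $x\mapsto x^3$ bijective on $\IF_p$, so $x^3\equiv a$ has a root. For $p\nmid 3a$ this gives a degree-one prime, yielding $\gg \D^{\delta}/\log \D$ such primes. It remains to choose $\delta=1/(3\ell)-\log A_2/(3\ell\log \D)$, which gives $\D^{1/2-1/(3\ell)+\varepsilon}A_2^{1/(3\ell)}$. I expect the delicate point to be the height lower bound with the precise $A_2$-exponent, together with making the non-rationality argument for the quotients $\mathfrak p_i/\mathfrak p_j$ rigorous.
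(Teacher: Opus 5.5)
Your architecture matches the paper's: a Dubickas-type lower bound for $\eta(K)$, fed into the refined Key-Lemma (threshold $\delta<\gamma/\ell$ whenever $\eta(K)>D_K^\gamma$), together with Heath-Brown's degree-one primes above $p\equiv 2\pmod 3$. That last step is fine. The gap is in the height bound, which is the only new ingredient. Your conjugate inversion bounds $\max_\sigma|\sigma\alpha|$ from below by $|z|(A_1^2A_2)^{1/3}$ or $|y|a^{1/3}$. This says nothing unless $|z|\geq 1$ or $|y|\geq 1$, i.e.\ unless $\alpha$ has integral coordinates. But the Key-Lemma is governed by $\eta(K)=\inf\{H_K(\alpha): K=\IQ(\alpha)\}$ over all of $K$, not over $\Oseen_K\setminus\IZ$: the elements that arise generate $\mathfrak{p}_i^\ell\mathfrak{p}_j^{-\ell}$ and are not integral.

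The missing idea is Dubickas' control of denominators. Write $\alpha=b_0+b_1\theta+b_2\theta^2$ with $b_2\neq 0$, and let $T$ be the leading coefficient of the minimal polynomial of $\alpha$. Then $3Tb_2\theta^2=T\sum_{j=1}^3\zeta^{-2(j-1)}\alpha_j$ is an algebraic integer. Writing $3Tb_2=D_0/D$ in lowest terms and cubing gives $D^3\mid a^2=A_1^2A_2^4$, hence $D\mid A_2$. Therefore
\[
H_K(\alpha)\geq T\max_j|\alpha_j|\geq T|b_2|\theta^2\geq \frac{\theta^2}{3A_2}=\frac{A_1^{2/3}A_2^{1/3}}{3}.
\]
This is exactly where the factor $A_2^{1/(3\ell)}$ comes from: with $D_K\ll (A_1A_2)^2$ it gives $\eta(K)\gg D_K^{1/3}A_2^{-1/3}$.

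Second, you mishandle the case $z=0$, i.e.\ $\alpha=x+y\theta$. There the largest conjugate only gives $|y|a^{1/3}$, so the worst of your two alternatives is $(A_1A_2)^{1/3}\min(A_1,A_2)^{1/3}$, not $(A_1^2A_2)^{1/3}$. Consider the case $A_2<A_1$, which is essentially the only range in which the proposition improves on (\ref{ineq:HB}). There your bound yields only $D_K^{1/2-1/(3\ell)+\varepsilon}A_1^{1/(3\ell)}$, which is no better than (\ref{ineq:HB}) because $A_1\gg D_K^{1/4}$. The paper, following Dubickas, replaces $\alpha$ by $\alpha^{-1}$. This has the same height and a nonzero $\theta^2$-coefficient, so the bound above applies.

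Finally, drop your re-derivation of the Key-Lemma and cite it instead. You cannot in general choose a generator of a principal ideal whose archimedean absolute values are comparable up to absolute constants. Units only balance them up to factors $e^{O(R)}$, where the regulator $R$ can be large. Ellenberg and Venkatesh instead run a volume argument in the Arakelov class group, whose volume is comparable to $hR\ll D_K^{1/2+\varepsilon}$ and so absorbs the units. Also, what you need is that the classes $[\mathfrak{p}_i]^\ell$ are pairwise distinct in $\Cl_K$, so that the subgroup of $\ell$-th powers has at least $M$ elements. Distinctness in $\Cl_K/\ell\Cl_K$ would instead give a lower bound for $\#\Cl_K[\ell]$.
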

We have $(A_1A_2)^2\ll D_K\ll (A_1A_2)^2$ (see (\ref{ineq:Disc}) below). By replacing $a$ with $a^2/A_2^3$ we can swap the roles of $A_1$ and $A_2$. Hence we can assume $A_2\leq A_1$, and therefore $A_2\ll D_K^{1/4}$.
If $a$ is squarefree (or squarefull) then we get
\begin{alignat*}1
\#\Cl_K[\ell]\ll_{\varepsilon,\ell}D_K^{1/2-1/(3\ell)+\varepsilon}.
\end{alignat*}
In the worst case when $A_1\asymp A_2$ we recover (\ref{ineq:HB}). It should also be mentioned that for $\ell=3$ 
one has
\begin{alignat*}1
\#\Cl_K[3]\ll_{\varepsilon}D_K^{\varepsilon}
\end{alignat*}
by work of Gerth \cite{Gerth76} (see \cite[(1.4)]{HB25} for more details). 

Our results apply for pure fields of any odd degree. 
We say $K$ is a pure field of degree $d$ if 
$K=\IQ(\theta)$, where $\theta$ is a root of some irreducible $f(x)\in \IZ[x]$ of the form $f(x)=x^d-a$.
Although stated only for cubic fields, Heath-Brown's observation  holds 
for pure fields of any odd degree, giving 
 \begin{alignat}1\label{ineq:SilHB}
\#\Cl_K[\ell]\ll_{\varepsilon,\ell,d}D_K^{1/2-1/(2(d-1)\ell)+\varepsilon}
\end{alignat}
for any $\varepsilon>0$, and any positive integer $\ell$ (not just primes).

If $K=\IQ(a^{1/d})$ is a pure field of degree $d$ then we can assume 
\begin{alignat}1\label{eq:a}
a=\prod_{i=1}^{d-1}A_i^i
\end{alignat} 
where the positive integers $A_1,\ldots,A_{d-1}$ are squarefree and pairwise coprime. 
If $k$ and $d$ are coprime then $(a^k)^{1/d}$ is also a generator of $\IQ(a^{1/d})$.
Therefore, one can replace\footnote{Suppose $k$ is coprime to $d$. Delete all $d$-th powers from $a^k$ to get $\phi_k(a)$. Then $\phi_k(a)^{1/d}$ is also a generator of $K$ and we have
\begin{alignat*}1
\phi_k(a)=\prod_{i=1}^{d-1}A_i^{[ik]},
\end{alignat*} 
where $[ik]\in \{1,2,\ldots,d\}$ with $ik\equiv [ik] \pmod d$.}
$A_1$ by $A_k$ as we did above 
whenever helpful.

It is clear that each prime $p$ dividing $A_1$
must occur with exponent at least $d-1$ in $\D$. 
This shows that $\prod_{(k,d)=1} A_k^{d-1}$ divides $\D$.

Each prime $p>d$ divides $\D$ at most with exponent $d-1$. For primes $p\leq d$
we note that the discriminant of the polynomial $x^d-a$ is $\pm d^da^{d-1}$.
Hence, we get
\begin{alignat}1\label{ineq:Disc}
\left(\prod_{(k,d)=1} A_k\right)^{d-1}\leq D_K \ll_d (A_1\cdots A_{d-1})^{d-1}.
\end{alignat}

Proposition \ref{prop:HBD} is an immediate consequence of the following  result.
\begin{proposition}\label{prop:GB}
Let $d>1$ be odd, and let $K=\IQ(a^{1/d})$ be a pure field of degree $d$. 
Then we have
\begin{alignat*}1
\#\Cl_K[\ell]\ll_{\varepsilon,\ell,d}D_K^{1/2+\varepsilon}\left(\min_{\frac{d+1}{2}\leq m\leq d-1}\prod_{i=1}^{d-1}A_i^{\frac{i\cdot m}{d}-\lfloor \frac{i\cdot m}{d}\rfloor}\right)^{-1/\ell}
\end{alignat*}
for any $\varepsilon>0$, and any positive integer $\ell$.
\end{proposition}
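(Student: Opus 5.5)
We plan to follow the Ellenberg--Venkatesh approach (Key-Lemma, \cite[Lemma 2.3]{EllVentorclass}) in the form used by Heath-Brown for pure fields. The input is a large supply of small primes that split completely, or at least have a degree-one prime ideal, in $K$. The lemma says the following. Suppose there are $M\gg_\varepsilon \D^{-\varepsilon}\,Y$ degree-one prime ideals $\mathfrak p\subset\Oseen_K$ of norm $\le Y$, with $Y<\D^{1/(2\ell(d-1))}\cdot(\text{saving})$, such that no product of them of small norm is principal with a generator of small height. Then
\[
\#\Cl_K[\ell]\ll \D^{1/2+\varepsilon}M^{-1}.
\]
The aim is therefore to exhibit roughly $Y=\prod_i A_i^{\{im/d\}/\ell}$ such primes, up to $\D^{\varepsilon}$, for each admissible $m$. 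Here $\{x\}=x-\lfloor x\rfloor$. We then take the best $m$.

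\textbf{Step 1: replacing the generator.} For $m$ with $(d+1)/2\le m\le d-1$, we replace $\theta=a^{1/d}$ by $\theta_m=\theta^m/\prod_i A_i^{\lfloor im/d\rfloor}$. This is an algebraic integer with
\[
\theta_m^d=\prod_i A_i^{d\{im/d\}}.
\]
Following Dubickas, the point is that $\theta_m$ has small house, namely
\[
\lvert\theta_m\rvert=\prod_i A_i^{\{im/d\}},
\]
which is much smaller than the house of $\theta^m$. When $(m,d)>1$, $\theta_m$ need not generate $K$. In that case we use $\theta^m$ only to produce elements, not generators.

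\textbf{Step 2: producing primes via Heath-Brown's observation.} A rational prime $p$ not dividing $da$ has a degree-one prime above it in $K=\IQ(\theta)$ whenever $a$ is a $d$-th power residue mod $p$. Since $d$ is odd, every $p\equiv 2\pmod d$ type residue class, or more simply every $p$ with $\gcd(p-1,d)=1$, makes $x\mapsto x^d$ bijective mod $p$. So a positive density of primes, by Dirichlet's theorem and hence unconditionally, have such a prime $\mathfrak p$. This gives $\gg Y/\log Y$ degree-one primes of norm $\le Y$, as in Heath-Brown's argument for (\ref{ineq:SilHB}).

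\textbf{Step 3: the non-principality input, which is the main obstacle.} We must show that if $\mathfrak p_1^{\ell}\cdots$ (rather, a quotient $\mathfrak p_1\mathfrak p_2^{-1}$ in the $\ell$-th power formulation) equals $(\alpha)$, then $\alpha$ cannot have height below the threshold. Equivalently, a nonrational element $\alpha\in\Oseen_K$ of norm $\le Y^{\ell}$ must have large height. The usual bound uses $H(\alpha)\gg \D^{1/(2d(d-1))}$ for generators. Here we will instead use Dubickas' idea. If $\alpha=\sum c_j\theta_m^{j}$ is small in every embedding, then comparing with the lattice spanned by powers of $\theta_m$ (whose covolume is governed by $\prod_i A_i^{\{im/d\}}$ rather than by $\D$) forces the norm of $\alpha$ to exceed a power of $\prod A_i^{\{im/d\}}$. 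I expect making this rigorous, including handling $\alpha$ lying in a proper subfield, to be the delicate point. The choice $m\ge (d+1)/2$ is exactly what lets the contribution of each $A_i$ be controlled symmetrically with $d-m$.

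\textbf{Step 4: conclusion.} Assuming Step 3, the Key-Lemma yields $\#\Cl_K[\ell]\ll \D^{1/2+\varepsilon}Y^{-1}$ with $Y=\prod_i A_i^{\{im/d\}/\ell}$. Minimizing over $m$ then gives the stated bound.
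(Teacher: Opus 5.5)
\textbf{There is a genuine gap: Step 3, the one step that carries the proof, is not established, and the route you sketch for it does not work.}

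Your outer framework matches the paper's. You use Heath-Brown's primes $p\equiv 2 \pmod d$, $p\nmid da$, each having a degree-one prime above it. Your Step~2 is fine: these primes are unramified, and having degree one they are not extended from proper subfields. They are fed into the Key-Lemma in its strengthened form, where $\delta<1/(2\ell(d-1))$ may be replaced by $\delta<\gamma/\ell$ whenever $\eta(K)>D_K^{\gamma}$. In this framework the only new ingredient is a lower bound for the height of \emph{every} generator of $K$. In your notation $\{x\}=x-\lfloor x\rfloor$:
\[
H_K(\alpha)\gg_d \min_{\frac{d+1}{2}\le m\le d-1}\prod_{i=1}^{d-1}A_i^{\{im/d\}}\qquad\text{whenever } K=\IQ(\alpha).
\]
This is exactly what Step~3 leaves open, and the problems there are concrete:
\begin{itemize}
\item What you formulate (norms of small elements of $\Oseen_K$) is not what the Key-Lemma consumes; it needs a lower bound on $\eta(K)$.
\item Comparing with the lattice spanned by powers of $\theta_m$ cannot control an arbitrary generator. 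Such an $\alpha$ need not lie in $\IZ[\theta_m]$, nor even be integral; the $\alpha$ with $(\alpha)=\mathfrak p_i^{\ell}\mathfrak p_j^{-\ell}$ arising in the Key-Lemma are not.
\item Step~1 does not help: exhibiting a small element can only bound $\eta(K)$ from above.
\item $m$ is not a parameter you may choose and optimize. It is imposed by $\alpha$, which is why the statement has $\min_m$. ``Taking the best $m$'' would produce $\max_m$, a stronger bound that nothing in your argument justifies.
\end{itemize}

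The missing idea is Dubickas' argument, which the paper reproduces.
\begin{itemize}
\item Write $\alpha=\sum_{k=0}^{m}b_ka^{k/d}$ with $b_k\in\IQ$, $b_m\neq 0$. Since $H_K(\alpha^{-1})=H_K(\alpha)$, passing to $\alpha^{-1}$ if necessary gives $m\ge (d+1)/2$.
\item Inverting the Vandermonde system in the $d$-th roots of unity gives $b_ma^{m/d}=\sum_{j=1}^{m+1}X_j\alpha_j$, with $X_j\in\IQ(\zeta_d)$, $|X_j|\ll_d 1$ and $d^mX_j$ integral.
\item Let $T$ be the leading coefficient of the minimal polynomial of $\alpha$. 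Then $d^mTb_ma^{m/d}$ is a nonzero algebraic integer of absolute value $\ll_d T\max_j|\alpha_j|\le H_K(\alpha)$, the latter being the Mahler measure.
\item This integer is a rational multiple of $a^{m/d}=\prod_iA_i^{\lfloor im/d\rfloor}\,\theta_m$. Since $\theta_m^d=\prod_iA_i^{d\{im/d\}}$ has all prime exponents in $\{0,\dots,d-1\}$, integrality forces it to be a nonzero \emph{integer} multiple of $\theta_m$. The paper phrases this as $D\le\prod_{p\mid a}p^{\lfloor m\,\mathrm{ord}_p(a)/d\rfloor}$ for the denominator $D$ of $d^mTb_m$.
\item Hence $H_K(\alpha)\gg_d|\theta_m|=\prod_iA_i^{\{im/d\}}$ for the $m$ attached to $\alpha$, so $\eta(K)\gg_d\min_m\prod_iA_i^{\{im/d\}}$.
\item Define $\gamma$ by $C_d\min_m\prod_iA_i^{\{im/d\}}=D_K^{\gamma}$ and take $\delta=\gamma/\ell-\varepsilon$ in the Key-Lemma. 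Together with the good primes, this gives the proposition.
\end{itemize}
So your $\theta_m$ does play a role, but as the smallest algebraic integer on the line $\IQ a^{m/d}$. It enters through a lower bound, not through its small house.
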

Let us consider two examples. If $a$ is squarefree (i.e., $a=A_1$) then we get  
\begin{alignat*}1
\#\Cl_K[\ell]\ll_{\varepsilon,\ell,d}D_K^{1/2-1/(2(d-1)\ell)-1/(2d(d-1)\ell)+\varepsilon},
\end{alignat*}
giving a small but significant improvement upon (\ref{ineq:SilHB}). If $a$ is cubefree (i.e., $a=A_1A_2^2$), then  we get  
\begin{alignat*}1
\#\Cl_K[\ell]\ll_{\varepsilon,\ell,d}D_K^{1/2-(d+1)/(2(d-1)\ell)+\varepsilon}A_2^{\frac{d-1}{2d\ell}},
\end{alignat*}
or equivalently
\begin{alignat*}1
\#\Cl_K[\ell]\ll_{\varepsilon,\ell,d}D_K^{1/2-1/(2(d-1)\ell)+\varepsilon}\left(\frac{A_2^{d-2}}{A_1}\right)^{1/(2d\ell)},
\end{alignat*}
giving an improvement upon (\ref{ineq:SilHB}) provided $A_1$ is sufficiantly large in terms of $A_2$ and $d$.

Throughout this article the implied constants in Vinogradov's notation $\ll$ and $\gg$ are absolute unless dependence on further parameters is explicitly mentioned by adding subscripts.    

\section{Background and definitions} 

Let 
\begin{alignat*}1
H_K(\alpha)=\prod_{v\in M_K}\max\{1,|\alpha|_v\}^{d_v}
\end{alignat*}
be the relative multiplicative Weil height of $\alpha\in K$.
Here $M_K$ denotes the set of places of $K$, and for each place $v$ we choose the unique representative $| \cdot |_v$ 
that either extends the usual Archimedean absolute value on $\IQ$ or a usual $p$-adic absolute value on $\IQ$, and $d_v = [K_v : \IQ_v]$ denotes the local degree at $v$.
Note that this is exactly the height  in \cite[(2.2)]{EllVentorclass} for the principal divisor $(\alpha, (\alpha))$ associated to $\alpha\in K^\times$.

Since $\max\{1,|\alpha\beta|_v\}\leq \max\{1,|\alpha|_v\}\max\{1,|\beta|_v\}$ we conclude that 
$$H_K(\alpha\beta)\leq H_K(\alpha)H_K(\beta)$$
for all $\alpha,\beta \in K$.
We write $\del$ for the minimal height of a primitive element\footnote{By Northcott's Theorem such a minimal element exists.}
\begin{alignat*}1
\del=\inf\{H_K(\alpha);K=\IQ(\alpha)\}.
\end{alignat*}

First we recall Ellenberg and Venkatesh's Key-Lemma \cite[Lemma 2.3]{EllVentorclass}. 
Recall from \cite{EllVentorclass} that a prime ideal $\mathfrak{p}$ of $\Oseen_K$ is said to be an extension of a prime ideal 
from a subfield $K_0\subsetneq K$ if there exists a prime ideal $\mathfrak{p}_0$ of $\Oseen_{K_0}$ such that $\mathfrak{p}=\mathfrak{p_0}\Oseen_K$. For such a prime ideal $\mathfrak{p}$ the residue degree $f(\mathfrak{p}/p)$ is necessarily larger than $1$.

If $\mathfrak{p}$ and $\mathfrak{p}_0$ are non-zero prime ideals in $\Oseen_K$ and $\Oseen_{K_0}$ respectively and $\mathfrak{p}\mid \mathfrak{p}_0\Oseen_K$ 
then we say $\mathfrak{p}$ is unramified in $K/K_0$
if $\mathfrak{p}^2\nmid \mathfrak{p}_0\Oseen_K$.

\begin{lemma}[Ellenberg and Venkatesh]\label{keylemma}
Suppose $K$ is a number field of degree $d>1$,  $\delta<1/(2\ell(d-1))$, and $\varepsilon>0$.
Moreover, suppose $\mathfrak{p}_1,\ldots,\mathfrak{p}_M$ are $M$ prime ideals in $\Oseen_K$ of norm
$N(\mathfrak{p}_i)\leq \D^\delta$ that are unramified in $K/\IQ$ and are not extensions of prime ideals from any proper subfield of $K$. 
Then we have 
$$\#\Cl_K[\ell] \ll_{d,\ell,\gamma,\varepsilon} \D^{1/2+\varepsilon}M^{-1}.$$
\end{lemma}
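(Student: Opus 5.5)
The plan is a counting argument: show that the $\ell$-th powers of the prime classes are essentially distinct, and hence that $\ell\Cl_K$ is large. Write $h=\#\Cl_K$. The map $x\mapsto x^\ell$ on $\Cl_K$ has kernel $\Cl_K[\ell]$, so $\#\Cl_K[\ell]=h/\#\ell\Cl_K$. Minkowski's bound and the Brauer--Siegel upper bound $hR_K\ll_{d,\varepsilon}\D^{1/2+\varepsilon}$ give $h\ll_{d,\varepsilon}\D^{1/2+\varepsilon}$. It therefore suffices to show that the $M$ classes $[\mathfrak{p}_1]^\ell,\ldots,[\mathfrak{p}_M]^\ell$ meet $\ell\Cl_K$ in $\gg_{d,\ell,\varepsilon} M\D^{-\varepsilon}$ distinct elements.

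The archimedean places cannot be ignored: a generator of $\mathfrak{p}_i^\ell\mathfrak{p}_j^{-\ell}$ need not have small height. I would therefore run the argument in the Arakelov class group. Equip each $\mathfrak{p}_i$ with the balanced metric, i.e.\ the Arakelov divisor whose archimedean components all equal $N(\mathfrak{p}_i)^{1/d}$. Its class lies in $\mathrm{Pic}^0$, which is an extension of $\Cl_K$ by the torus $T=\mathbb{R}^{r}/\log\Oseen_K^\times$.

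Suppose $[\mathfrak{p}_i]^\ell=[\mathfrak{p}_j]^\ell$ in $\Cl_K$. Then the Arakelov classes of $\mathfrak{p}_i^\ell$ and $\mathfrak{p}_j^\ell$ differ only by a point $t\in T$. Cover $T$ by $\ll_{d,\ell,\varepsilon}\D^{\varepsilon}$ cells of bounded diameter (here the bound $R_K\ll \D^{1/2+\varepsilon}$ is not enough). I would instead use that $\ell T\simeq T$, so one only needs to pigeonhole among the indices $i$ that share an ideal class. Among those, partition according to which cell of $T$ the point lands in; within a cell two indices $i\neq j$ produce $\alpha\in K^\times$ with
\[
(\alpha)=\mathfrak{p}_i^\ell\mathfrak{p}_j^{-\ell},\qquad |\alpha|_v\asymp_{d,\ell} \bigl(N(\mathfrak{p}_i)/N(\mathfrak{p}_j)\bigr)^{\ell/d}\ \text{for all archimedean } v.
\]
This gives $H_K(\alpha)\ll_{d,\ell}(N(\mathfrak{p}_i)N(\mathfrak{p}_j))^{\ell}\le \D^{2\delta\ell}$.

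Next I show that $\alpha$ is primitive. If $\alpha\in K_0\subsetneq K$, then $(\alpha)$ is extended from $K_0$. Let $\mathfrak{p}_0=\mathfrak{p}_i\cap\Oseen_{K_0}$. Since $\mathfrak{p}_i$ is unramified and is not an extension of a prime from $K_0$, we have $\mathfrak{p}_0\Oseen_K=\mathfrak{p}_i\mathfrak{q}\cdots$ for some prime $\mathfrak{q}\neq\mathfrak{p}_i$ occurring with positive exponent. Hence $(\alpha)$ would contain $\mathfrak{q}$ with exponent $>0$. The only other prime in $(\alpha)$ is $\mathfrak{p}_j$, and it occurs with negative exponent, a contradiction. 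The case $K_0=\IQ$ is covered by the same argument.

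On the other hand, Silverman's inequality for a primitive element gives $H_K(\alpha)\gg_d \D^{1/(2(d-1))}$. Since $2\delta\ell<1/(d-1)$, this contradicts the upper bound $H_K(\alpha)\le\D^{2\delta\ell}$ once $\D$ is large; small $\D$ are absorbed into the implied constant. So each cell contains at most one index per class of $\ell\Cl_K$, which gives the required $\gg M\D^{-\varepsilon}$ distinct classes and finishes the proof.

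The main obstacle is the archimedean pigeonholing in the second step: making the number of torus cells $\D^{o(1)}$ rather than of size $R_K$. If the direct covering fails, I would fall back on the Ellenberg--Venkatesh formulation. There the statement is proved for the Arakelov class group, using that $\ell$-th powers in $\mathrm{Pic}^0$ of balanced divisors of small degree stay distinct, and one passes to $\Cl_K[\ell]$ via $\#\mathrm{Pic}^0[\ell]\ge\#\Cl_K[\ell]$.
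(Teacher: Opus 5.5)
The paper does not prove this lemma; it quotes \cite[Lemma 2.3]{EllVentorclass}, whose proof is a packing argument in the Arakelov class group $\mathrm{Pic}^0$. This is a compact group of volume $\asymp_d h_KR_K$ (with $h_K=\#\Cl_K$). Several of your ingredients are the right ones: the balanced Arakelov divisors, the generator $\alpha$ of $\mathfrak{p}_i^\ell\mathfrak{p}_j^{-\ell}$ with controlled archimedean size, and Silverman's bound. Your primitivity argument via the non-extension and unramifiedness hypotheses is correct. The counting step, however, has a genuine gap.

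You discard the regulator by using only $h_K\ll\D^{1/2+\varepsilon}$. You then need $\#\{[\mathfrak{p}_i]^\ell\}\gg M\D^{-\varepsilon}$, and hence a cover of $T$ by $\D^{o(1)}$ cells of bounded diameter. That cover cannot exist. A coset of $T$ has volume $\asymp_d R_K$, so it needs $\gg_d R_K$ such cells, and nothing makes $R_K=\D^{o(1)}$. Passing to $\ell T\simeq T$ does not shrink that coset. The intermediate claim itself is not available: it would force $h_K\ge\#\ell\Cl_K\gg M\D^{-\varepsilon}$, yet if $h_K=1$ all $[\mathfrak{p}_i]^\ell$ coincide.

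The repair is to keep $R_K$ and pair it with $h_K$. Take cells of a fixed small diameter, below the injectivity radius of $T$, which is $\gg_d 1$ since nonzero log-unit vectors have length $\gg_d 1$. Then each coset of $T$ needs only $\ll_d 1+R_K$ cells. Your argument allows at most one index per (class, cell), so $M\ll_d \#\ell\Cl_K\cdot(1+R_K)$, and
\[
\#\Cl_K[\ell]=\frac{h_K}{\#\ell\Cl_K}\ll_d\frac{h_K(1+R_K)}{M}\ll_{d,\varepsilon}\frac{\D^{1/2+\varepsilon}}{M}
\]
by $R_K\gg 1$ and Landau's bound $h_KR_K\ll_d\D^{1/2}(\log\D)^{d-1}$. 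This is the volume argument your last paragraph names but does not carry out. There, mere distinctness of the $\ell$-th powers is useless in the continuous group $\mathrm{Pic}^0$. What you need is separation by a fixed distance, so that disjoint balls can be packed into $\ell\,\mathrm{Pic}^0$.

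Second, your height estimate loses a factor $2$ in the exponent, and with it the contradiction. From $H_K(\alpha)\le\D^{2\delta\ell}$ and $2\delta\ell<1/(d-1)$, nothing contradicts $H_K(\alpha)\gg_d\D^{1/(2(d-1))}$ once $\delta\ge 1/(4\ell(d-1))$. The correct bound is $H_K(\alpha)\ll_{d,\ell}\max\{N(\mathfrak{p}_i),N(\mathfrak{p}_j)\}^\ell\le\D^{\delta\ell}$. The only finite place with $|\alpha|_v>1$ is $\mathfrak{p}_j$, which contributes $N(\mathfrak{p}_j)^\ell$. The nearly balanced archimedean places contribute $\ll_{d,\ell}\max\{1,N(\mathfrak{p}_i)/N(\mathfrak{p}_j)\}^\ell$. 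Then $\delta\ell<1/(2(d-1))$ gives exactly the contradiction with Silverman's bound. This sharper estimate is also why the paper can replace the hypothesis by $\delta<\gamma/\ell$ whenever $\eta(K)>\D^\gamma$.
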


Here the hypothesis $\delta<1/(2\ell(d-1))$ can be replaced  by $\delta<\gamma/\ell$ as long as $\del>\D^\gamma$. 
This fact will be used for the proof of Proposition \ref{prop:HBD}.

It turns out that
for a ``typical'' $K$ one expects $\del$ to be much larger than  $\D^{1/(2(d-1))}$ and this led to improvements for the average
 $\#\Cl_K[\ell]$ over  various families (see \cite{doi:10.1112/blms.12113,elltor1,doi:10.1007/s00208-020-02121-2}).

Writing  $N'_K(X)$ for the number of primitive elements in $K$ of 
(relative) height less than $X$, Ellenberg \cite[Proposition 1]{Elllowheight} pointed out that the proof of \cite[Lemma 2.3]{EllVentorclass} even 
provides the stronger conclusion 
\begin{alignat}1\label{ineq:EVstrong}
\#\Cl_K[\ell] \ll_{d,\ell,\varepsilon} \D^{1/2+\varepsilon}X^{-1/\ell+\varepsilon}(1+N'_K(X)),
\end{alignat}
provided there are $\gg_{d,\ell,\varepsilon} X^{1/\ell-\varepsilon}$ prime ideals  $\mathfrak{p}$ in $\Oseen_K$ of norm
$N(\mathfrak{p})< X^{1/\ell}$ that are unramified in $K/\IQ$ and are not extensions of prime ideals from any proper subfield of $K$.
Following Ellenberg we define
$$M_{K,\ell}:=\inf_X(X^{-1/\ell}(1+N'_K(X))).$$
It is well-known (cf. \cite[Theorem 2]{Silverman} or \cite[Lemma 2]{8}) that $\del>(1/2)\D^{1/(2(d-1))}$, so that $N'_K(X)=0$ whenever $X\leq (1/2)\D^{1/(2(d-1))}$. Hence, 
$$M_{K,\ell}=\inf_{X\geq (1/2)\D^{1/(2(d-1))}}(X^{-1/\ell}(1+N'_K(X)))\leq 2\D^{-1/(2\ell(d-1))},$$ 
and thus 
\begin{alignat}1\label{def:f}
f(\ell,d):=\liminf_{K\atop [K:\IQ]=d}\frac{-\log M_{K,\ell}}{\log \D}\geq \frac{1}{2\ell(d-1)}.
\end{alignat}


\section{A lower bound for $M_{K,\ell}$}
To prove Proposition \ref{prop:fdl} we need a sufficently good lower bound for
$M_{K,\ell}$. And for this in turn we need to have a good lower bound on the number of small generators
$$N'_K(X)=\#\{\gamma\in K; K=\IQ(\gamma), H_K(\gamma)<X\}.$$
Surprisingly, it suffices to consider rational multiples of a minimal generator, and this gives us the next lemma. 
\begin{lemma}\label{lemma:count}
Let $K$ be a number field of degree $d>1$, and let $\delta>0$. Then $N'_K(\del \D^\delta)\gg \D^{2\delta/d}$.
\end{lemma}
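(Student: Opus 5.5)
Let $\alpha\in K$ be a primitive element with $H_K(\alpha)=\del$; it exists by Northcott, as noted in the footnote. The idea, following the hint before the lemma, is to count the rational multiples $r\alpha$ with $r\in\IQ^\times$. Each $r\alpha$ is again a primitive element of $K$, since $\IQ(r\alpha)=\IQ(\alpha)=K$. Distinct $r$ give distinct elements because $\alpha\neq 0$. It therefore suffices to show
\begin{alignat*}1
\#\{r\in\IQ^\times;\ H_K(r)\le \D^{\delta}/c\}\gg \D^{2\delta/d}
\end{alignat*}
for a suitable constant $c$, and then apply the submultiplicativity $H_K(r\alpha)\le H_K(r)H_K(\alpha)$ established in the background section. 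Here I take $c=1$, with a strict inequality to match the definition of $N'_K(X)$ as a count of heights $<X$.

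To evaluate $H_K(r)$ for $r\in\IQ$, I use $\sum_{v\mid w}d_v=d$ for each place $w$ of $\IQ$. Since $|r|_v=|r|_w$ for $v\mid w$, we get $H_K(r)=H_\IQ(r)^d$. For $r=p/q$ in lowest terms, $H_\IQ(r)=\max\{|p|,|q|\}$. So $H_K(r\alpha)<\del\D^\delta$ holds as soon as $\max\{|p|,|q|\}^d<\D^{\delta}$, that is, $\max\{|p|,|q|\}< Y:=\D^{\delta/d}$.

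The number of reduced fractions $p/q$ with $q\ge1$, $p\neq0$ and $\max\{|p|,|q|\}<Y$ is asymptotically $(12/\pi^2)Y^2$. Unconditionally it is $\gg Y^2$ for $Y\ge2$: for instance, count the pairs $(p,q)=(\pm p,1)$ and the fractions $p/q$ with $q$ prime, $1\le p<q<Y$, or simply use the standard bound $\sum_{q<Y}\varphi(q)\gg Y^2$. This gives $N'_K(\del\D^\delta)\gg Y^2=\D^{2\delta/d}$ once $\D^{\delta/d}\ge 2$.

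The only real point of care is the degenerate range where $\D^{\delta/d}<2$. There the claim needs only $N'_K(\del\D^\delta)\ge 1$, up to an absolute constant, because $\D^{2\delta/d}<4$. The elements $\pm\alpha$ are primitive of height $\del<\del\D^\delta$ whenever $\delta>0$ and $\D>1$. We do have $\D>1$ since $d>1$ (Minkowski), so in this range $N'_K(\del\D^\delta)\ge 2\gg \D^{2\delta/d}$. The implied constant is then absolute, as the convention in the paper requires. I do not expect any genuine obstacle: the key steps are the identity $H_K(r)=H_\IQ(r)^d$ and the elementary lattice-point count.
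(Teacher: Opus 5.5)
Your proof is correct and is essentially the paper's argument: take a minimal-height generator $\alpha$, multiply by nonzero rationals $\beta$ with $H_{\IQ}(\beta)<\D^{\delta/d}$, and use $H_K(\beta)=H_{\IQ}(\beta)^d$, submultiplicativity, and a $\gg T^2$ count of such rationals. Your separate treatment of $\D^{\delta/d}<2$ just makes explicit what the paper's ``$T>1$'' already covers. One small correction: the prime-denominator alternative only gives $\asymp Y^2/\log Y$ fractions, so it falls short of $\gg Y^2$, and you should rely on $\sum_{q<Y}\varphi(q)\gg Y^2$ instead.
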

\begin{proof}
Let $\alpha\in K$ with $H_K(\alpha)=\del$ and $\IQ(\alpha)=K$.     Now consider the primitive elements $\alpha\beta$ with nonzero $\beta\in \IQ$ and $H_K(\beta)< \D^{\delta/d}$.
Note that $H_K(\beta)=H_{\IQ}(\beta)^d$. Writing $\beta=b_1/b_0$ with coprime integers $b_0>0$ and $b_1$ it follows from the product formula that $H_{\IQ}(\beta)=\max\{b_0,|b_1|\}$.
Hence, there are $\gg T^2$ non-zero elements $\beta\in \IQ$ with $H_{\IQ}(\beta)<T$ if $T>1$. 
Applying this with $T=\D^{\delta/d}>1$ we conclude that there are $\gg \D^{2\delta/d}$ of these elements $\beta\in \IQ$. Using the trivial estimate $H_K(\alpha\beta)\leq H_K(\alpha)H_K(\beta)$ 
we conclude that $N'_K(\del \D^\delta)\gg \D^{2\delta/d}$.
\end{proof}
Now we can prove a lower bound for $M_{K,\ell}$.
\begin{lemma}\label{prop:MKl}
Let $K$ be a number field of degree $d>1$ and suppose $\ell\geq d/2$. Then
$$M_{K,\ell}\gg \del^{-1/\ell}.$$
\end{lemma}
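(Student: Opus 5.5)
We have to bound $X^{-1/\ell}(1+N'_K(X))$ from below by $\gg \del^{-1/\ell}$ uniformly in $X>0$. We split into two ranges of $X$ according to whether $X\le \del$ or $X>\del$.

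\emph{The range $X\le \del$.} Here the claim is trivial: since $1+N'_K(X)\geq 1$ we get
$$X^{-1/\ell}(1+N'_K(X))\geq X^{-1/\ell}\geq \del^{-1/\ell}.$$

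\emph{The range $X>\del$.} We write $X=\del \D^\delta$ with $\delta>0$ and invoke Lemma \ref{lemma:count}, which gives $N'_K(X)\gg \D^{2\delta/d}$. Hence
$$X^{-1/\ell}(1+N'_K(X))\gg \del^{-1/\ell}\D^{-\delta/\ell}\D^{2\delta/d}=\del^{-1/\ell}\D^{\delta(2/d-1/\ell)}.$$
The hypothesis $\ell\geq d/2$ is equivalent to $2/d\geq 1/\ell$, so the exponent $\delta(2/d-1/\ell)$ is nonnegative. Since $\D\geq 1$, the last factor is at least $1$, and the bound $\gg \del^{-1/\ell}$ follows in this range as well.

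Taking the infimum over all $X$ then yields $M_{K,\ell}\gg \del^{-1/\ell}$. The implied constant comes only from Lemma \ref{lemma:count} and is therefore absolute. This matters, because the statement asserts an absolute constant in accordance with the paper's conventions on $\ll$.

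The one point needing care is that Lemma \ref{lemma:count} requires $\D^{\delta/d}>1$, i.e.\ $\D>1$. This holds for every $d>1$ by Minkowski's theorem, so every $\delta>0$, and hence the whole range $X>\del$, is covered. There is no other real obstacle: the argument consists of recognising that the count of rational multiples of a minimal generator grows at least as fast as $X^{1/\ell}$ exactly when $\ell\geq d/2$.
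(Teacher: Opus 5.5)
Your proof is correct and follows the paper's argument: the same split at $X=\del$, the same application of Lemma~\ref{lemma:count} with $X=\del\D^\delta$, and the same observation that $\D^{\delta(2/d-1/\ell)}\geq 1$ when $\ell\geq d/2$. The differences are cosmetic. In the range $X\le\del$ you use $1+N'_K(X)\geq 1$ where the paper uses $N'_K(X)=0$. You also explicitly invoke $\D>1$ (Minkowski) to justify applying the counting lemma, a point the paper leaves implicit.
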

\begin{proof}
If $X\leq \del$ then $N'_K(X)=0$. Hence,
\begin{alignat}1\label{term:MKX1}
\inf_{X\leq \del}(X^{-1/\ell}(1+N'_K(X)))=\del^{-1/\ell}.
\end{alignat}
If $X>\del$ then we can write $X=\del\D^\delta$ for some $\delta>0$, and thus 
\begin{alignat}1\label{term:MKX2}
\inf_{X> \del}(X^{-1/\ell}(1+N'_K(X)))=\inf_{\delta>0}\left((\del\D^\delta)^{-1/\ell}(1+N'_K(\del\D^\delta))\right).
\end{alignat}

Plugging the bound from Lemma \ref{lemma:count} into (\ref{term:MKX2}) shows that

\begin{alignat*}1
\inf_{X> \del}(X^{-1/\ell}(1+N'_K(X)))\gg \inf_{\delta>0}((\del\D^\delta)^{-1/\ell}\D^{2\delta/d})=\del^{-1/\ell}\inf_{\delta>0}\D^{\delta(2/d-1/\ell)}.
\end{alignat*}
Since $\ell \geq d/2$ we have $\inf_{\delta>0}\D^{\delta(2/d-1/\ell)}=1$, and thus
\begin{alignat}1\label{term:MKX3}
\inf_{X> \del}(X^{-1/\ell}(1+N'_K(X)))\gg \del^{-1/\ell}.
\end{alignat}
Combining (\ref{term:MKX1}) and (\ref{term:MKX3}) proves Lemma \ref{prop:MKl}.
\end{proof}

\section{proof of Proposition \ref{prop:fdl}}\label{sec:propfdl}
We have already seen that $f(\ell,d)\geq 1/(2\ell(d-1))$. Let us now suppose that $\ell\leq d/2$.
From Proposition \ref{prop:MKl} we know that $-\log M_{K,\ell}\leq \frac{1}{\ell}\log \del -\log C$ for some absolute constant $C>0$.
Hence,
\begin{alignat}1\label{term:proofPropfld1}
f(\ell,d)=\liminf_{K\atop [K:\IQ]=d}\frac{-\log M_{K,\ell}}{\log \D}\leq \liminf_{K\atop [K:\IQ]=d}\frac{\log \del}{\ell \log \D}.
\end{alignat}
The family of degree $d$-fields $K=\IQ(a^{1/d})$ as in (\ref{ineq:Disc}) with $a=A_1 A_{d-1}^{d-1}$ and $A_{d-1}\leq A_1\leq 2 A_{d-1}$ 
have a generator $\alpha=(A_1/A_{d-1})^{1/d}$ of height 
$$\del \leq H_K(\alpha)=A_1\leq \sqrt{2A_1A_{d-1}}\leq \sqrt{2}\D^{1/(2(d-1))}.$$

Plugging the above estimate  into (\ref{term:proofPropfld1}) 
for this infinite family  of fields  
shows that
$f(\ell,d)\leq 1/(2\ell(d-1))$. This completes  the proof of Proposition \ref{prop:fdl}.

\section{Constructing suitable primes}

The following lemma was observed by Heath-Brown (and possibly others including Jiuya Wang) but stated only for $d=3$.
For the convenience of the reader we give all details for general odd $d$.  
\begin{lemma}\label{lem:manygoodprimes}
Let $\delta>\varepsilon>0$. Then there are $\gg_{\varepsilon,d} D_K^{\delta-\varepsilon}$ many prime ideals $\mathfrak{p}|p$ in $\Oseen_K$ of degree $f(\mathfrak{p}/p)=1$,
ramification index $e(\mathfrak{p}/p)=1$, and norm $N(\mathfrak{p})<D_K^{\delta}$.
\end{lemma}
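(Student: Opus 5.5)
The plan is to produce degree-one primes by counting rational primes $p$ for which $x^d-a$ has a simple root modulo $p$. Here $K=\IQ(a^{1/d})$ is a pure field of odd degree $d$, as in the setting of this section.

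The key observation uses that $d$ is odd. Take any prime $p$ with $p\equiv 2 \pmod d$, or more generally $\gcd(d,p-1)=1$, and $p\nmid da$. Then $x\mapsto x^d$ is a bijection of $\IF_p^\times$, so $x^d\equiv a \pmod p$ has exactly one solution $x_0\in\IF_p$. Since $p\nmid da$, the root $x_0$ is simple: the derivative $dx_0^{d-1}$ is nonzero mod $p$. Moreover, $p\nmid \mathrm{disc}(x^d-a)=\pm d^da^{d-1}$, so $p\nmid[\Oseen_K:\IZ[\theta]]$. By Dedekind--Kummer, the factorization of $p\Oseen_K$ then mirrors that of $x^d-a$ mod $p$. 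The linear factor $x-x_0$ gives a prime $\mathfrak{p}=(p,\theta-x_0)$ with $f(\mathfrak{p}/p)=1$, and since $p$ is unramified, $e(\mathfrak{p}/p)=1$. Its norm is $N(\mathfrak{p})=p$. So it suffices to count primes $p<D_K^\delta$ with $p\equiv 2\pmod d$ and $p\nmid da$. (Pick any residue class $c$ with $\gcd(c,d)=1$ and $\gcd(c-1,d)=1$; $c=2$ works because $d$ is odd.)

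For the counting, use the prime number theorem in arithmetic progressions with fixed modulus $d$. For $Y\ge Y_0(d)$,
\[
\#\{p<Y;\ p\equiv 2 \pmod d\}\gg_d Y/\log Y.
\]
The primes dividing $a$ number at most $\omega(a)\ll \log a/\log\log a$. By (\ref{ineq:Disc}), $a\le (A_1\cdots A_{d-1})^{d-1}\ll_d D_K^{d-1}$, so $\log a\ll_d \log D_K$. Put $Y=D_K^\delta$. We get $\gg_d D_K^\delta/(\delta\log D_K)-O_d(\log D_K)$ admissible primes. For $D_K$ large in terms of $\varepsilon,d$ this is $\gg_{\varepsilon,d} D_K^{\delta-\varepsilon}$, since $\log D_K\ll_\varepsilon D_K^{\varepsilon}$. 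For the finitely many small $D_K$, adjust the implied constant. This needs at least one such prime, which holds once $D_K^\delta$ exceeds a bound depending on $d$. Otherwise $D_K^{\delta-\varepsilon}<D_K^{\delta}$ is bounded in terms of $d$ and $\varepsilon$, so the claim is trivially absorbed into the constant. Such small $D_K$ can occur because $\delta>\varepsilon$ but $\delta$ may be tiny.

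The only real obstacle is the Dedekind--Kummer step, which requires $p$ to be coprime to the index of $\IZ[\theta]$. Excluding $p\mid da$ handles this via the polynomial discriminant. Everything else, including uniformity in $\delta$, is routine bookkeeping. The constant in the Dirichlet density bound depends only on $d$, and the dependence on $\varepsilon$ enters only through absorbing $\log D_K$ and the finitely many exceptional $a$-primes.
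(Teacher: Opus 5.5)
Your argument is correct in substance and follows the paper's proof. You use primes $p\equiv 2 \pmod d$, for which $x\mapsto x^d$ is a bijection of $\IF_p^\times$ because $\gcd(d,p-1)=1$. You then apply Dedekind's theorem using $p\nmid d^da^{d-1}$, and count with Dirichlet's theorem.

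\textbf{One step needs repair.} For composite odd $d$ (e.g.\ $d=9$), the lower bound in (\ref{ineq:Disc}) only involves the $A_k$ with $(k,d)=1$. So it does not give $a\ll_d D_K^{d-1}$. Use instead, as the paper does, that every prime $p\mid a$ ramifies. For $\mathfrak{p}\mid p$ one has $\mathfrak{p}\mid\theta$, so $d\le v_{\mathfrak{p}}(a)=e(\mathfrak{p}/p)\ord_p(a)$ with $\ord_p(a)\le d-1$. Hence $\omega(a)\le\omega(D_K)\ll\log D_K$ directly.

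\textbf{Your ``routine bookkeeping'' overclaims in two places.}
\begin{itemize}
\item Small $D_K$: when $D_K^\delta\le 2$ there are no such primes at all. This is possible with $\delta>\varepsilon$ whenever $D_K<2^{1/\varepsilon}$. A zero count cannot be ``absorbed into the constant'' against the positive quantity $D_K^{\delta-\varepsilon}$.
\item Large $\delta$: for fixed $K$ the count is at most $d\,\pi(D_K^\delta)\ll_d D_K^{\delta}/(\delta\log D_K)$. This is not $\gg_{\varepsilon,d}D_K^{\delta-\varepsilon}$ uniformly as $\delta\to\infty$, because the factor $1/\delta$ is never absorbed.
\end{itemize}
So the lemma really holds for $D_K$ large in terms of $\varepsilon,d$ and for $\delta$ bounded. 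That is all the application to Proposition \ref{prop:GB} needs, and the paper's proof does not address these points either.
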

\begin{proof}
Let $K$ be a pure field of odd degree $d$. Hence there is a non-zero integer $m$, free of $d$-th powers, such that $f(x)=x^d-m$ is irreducible in $\IZ[x]$,  $K=\IQ(\theta)$ and $f(\theta)=0$. 

Note that the discriminant of $f$ has modulus $|\Delta_f|=d^dm^{d-1}$. 
Let $p|m$ be a prime divisor and $p^a$ the maximal power dividing $m$. Thus $1\leq a\leq d-1$. 
Let $\mathfrak{p}\subset \Oseen_K$ be a prime ideal above $p$. Since $m=\theta^d$ it follows that
$\mathfrak{p}|(\theta)$ and therefore $\mathfrak{p}^d|p^a$. Hence, $d\leq a\cdot e(\mathfrak{p}/p)$. This shows that $p$ ramifies in $K$.

The total number of primes $p$ that ramify in $K$ is $\omega(D_K)\ll_\varepsilon D_K^\varepsilon$. Next we show that for any unramified prime $p\equiv 2 \pmod d$
there exists a prime ideal $\mathfrak{p}|p$ in $\Oseen_K$ of degree $f(\mathfrak{p}/p)=1$, so that the claim
follows from Dirichlet's prime number theorem.   

So let $p$ be a prime with $p\equiv 2 \pmod d$ and $p\nmid m$. Let $g$ be a generator of the cyclic group $\IF_p^\times$. Reducing the polynomial 
$f$ modulo $p$ gives $\overline{f}(x)=x^d-g^t$ for some integer $t$. Hence $\overline{f}$ has a root $g^s$ in $\IF_p^\times$ if and only if
$sd\equiv t \pmod {p-1}$.  The latter has a solution $s$ if and only if $(d,p-1)|t$ which is true as $p\equiv 2 \pmod d$. Noting that $p\nmid [\Oseen_K:\IZ[\theta]]$ and applying Dedekind's factorisation theorem 
completes the proof.   
\end{proof}

\section{A height lower bound for generators of pure fields and proof of Proposition \ref{prop:GB}}
Ellenberg and Venkatesh used Silverman's classical lower bound for generators $\alpha$ of $K$
\begin{alignat}1\label{ineq:Silverman}
H_K(\alpha)\gg_d D_K^{1/(2(d-1))}
\end{alignat}
to get (\ref{ineq:EV}).
Dubickas \cite[Theorem 1]{Dubickas23} has improved (\ref{ineq:Silverman}) for pure fields $K=\IQ(a^{1/d})$ of odd degree $d$,
provided $a$ is prime. Following his proof and implementing the obvious minor modifications provides a lower bound for general $a$
that improves  (\ref{ineq:Silverman}) in some new cases, e.g., when $a$ is squarefree.

For the next result let $K=\IQ(a^{1/d})$ be a pure field of odd degree $d\geq 3$ with $a>1$. Recall that we can assume 
$$a=\prod_{i=1}^{d-1}A_i^i$$ 
where the positive integers $A_1,\ldots,A_{d-1}$ are squarefree and pairwise coprime.

\begin{proposition}[Dubickas, 2023]\label{prop:Dub}
Suppose $\alpha\in K$ and $K=\IQ(\alpha)$. Then 
\begin{alignat}1\label{ineq:Dub}
H_K(\alpha)> C_d \min_{\frac{d+1}{2}\leq m\leq d-1}\left(\prod_{i=1}^{d-1}A_i^{\frac{i\cdot m}{d}-\lfloor \frac{i\cdot m}{d}\rfloor}\right).
\end{alignat}
One can take $C_d=d^{-(2d-1)}$
\end{proposition}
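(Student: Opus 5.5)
The plan is to compare the size of a single coordinate of $\alpha$ in a well-chosen basis of $K$ with the archimedean and non-archimedean parts of $H_K(\alpha)$, following Dubickas. Put $\theta=a^{1/d}>0$ and, for $0\leq m\leq d-1$,
$$\gamma_m:=\theta^m\prod_{i=1}^{d-1}A_i^{-\lfloor im/d\rfloor}=\prod_{i=1}^{d-1}A_i^{\frac{im}{d}-\lfloor\frac{im}{d}\rfloor}.$$
Each $\gamma_m$ is an algebraic integer, since $\gamma_m^d\in\IZ$. The module $\Lambda=\sum_m\IZ\gamma_m$ has index in $\Oseen_K$ dividing a power of $d$; this is because $\Lambda$ is already maximal at every prime $p\nmid d$ (tame, totally ramified at $p\mid a$). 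Consequently there is $c=c(d)$ such that $\Oseen_K\subseteq d^{-c}\Lambda$. Write $\alpha=\sum_{j=0}^{d-1}r_j\gamma_j$ with $r_j\in\IQ$.

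First, the non-archimedean part. Let $\mathfrak b$ be the denominator ideal of $\alpha$, so that $\prod_{v\nmid\infty}\max\{1,|\alpha|_v\}^{d_v}=N(\mathfrak b)$. Since $N(\mathfrak b)\in\mathfrak b$, we have $N(\mathfrak b)\alpha\in\Oseen_K$. Hence $d^cN(\mathfrak b)r_j\in\IZ$, and so $|r_j|\geq d^{-c}N(\mathfrak b)^{-1}$ whenever $r_j\neq0$.

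Second, the archimedean part, obtained by discrete Fourier inversion. The embeddings send $\theta\mapsto\zeta^k\theta$ with $\zeta=e^{2\pi i/d}$. Writing $\alpha_k$ for the conjugates, we get
$$r_j\gamma_j=\frac1d\sum_{k=0}^{d-1}\zeta^{-jk}\alpha_k,\qquad\text{so}\qquad |r_j|\gamma_j\leq\max_k|\alpha_k|\leq\prod_k\max\{1,|\alpha_k|\}.$$
Multiplying the two estimates gives $H_K(\alpha)\geq d^{-c}\gamma_j$ for every $j$ with $r_j\neq0$.

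It remains to show that, for some $j$ with $r_j\neq0$, the quantity $\gamma_j$ is at least the minimum over $\frac{d+1}{2}\leq m\leq d-1$ appearing in (\ref{ineq:Dub}). This is the step I expect to be the main obstacle. Primitivity only guarantees some $r_j\neq0$ with $j\geq1$, and for $j<d/2$ the value $\gamma_j$ can be smaller than the target, as $\gamma_1=\prod A_i^{i/d}$ shows. To handle this I would exploit $H_K(\alpha)=H_K(\alpha^{-1})$ together with the pairing $\gamma_j\gamma_{d-j}=\prod_{i:\,d\nmid ij}A_i$.

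The idea is to pass from $\alpha$ to an element whose expansion contains a nonzero coefficient at an index in the upper half $[\frac{d+1}{2},d-1]$, paying only a factor depending on $d$ in height. The natural candidates are $\alpha^{-1}=\prod_{k\neq0}\alpha_k/N(\alpha)$, or more simply $\alpha^2$ or $\alpha-r_0$. I would try first to use the top nonzero index $J$ of $\alpha$. If $J\geq\frac{d+1}{2}$ we are done. If $J<d/2$, squaring $\alpha$ produces the index $2J$ with coefficient $r_J^2$ up to cross terms; because the $\theta^j$ are linearly independent and the exponent $2J<d$ is hit only by $J+J$, this coefficient is $r_J^2\ne0$. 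Iterating doubling until the index first lands in $[\frac{d+1}{2},d-1]$ requires at most $\log_2 d$ steps, and $H_K(\alpha^{2^s})=H_K(\alpha)^{2^s}$. The resulting loss in the exponent must then be compensated by re-running the Fourier/denominator argument with the exact coefficient $r_J^{2^s}$, whose archimedean and denominator bounds are controlled by those of $r_J$ itself; this bookkeeping is where I would expect to recover the stated constant $C_d=d^{-(2d-1)}$. The oddness of $d$ enters by guaranteeing that $d-j$ and $j$ never coincide, so one index of each pair lies strictly in the upper half.
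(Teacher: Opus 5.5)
You have a genuine gap in the final step: the doubling route cannot put a nonzero coordinate in the upper half at no cost, and the idea that does so (passing to $\alpha^{-1}$) is mentioned but not carried out.

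\textbf{What is fine.} Your first two steps work. Discrete Fourier inversion over all $d$ conjugates is a slicker way to isolate a coordinate than Dubickas' Lemma 7, which the paper uses. It gives $H_K(\beta)\gg_d\gamma_j$ for every generator $\beta$ and every index $j$ at which $\beta$ has a nonzero coordinate. Two small corrections:
\begin{itemize}
\item A prime $p\mid A_i$ with $p\nmid d$ has ramification index $d/\gcd(i,d)$, so it is not totally ramified in general. $\Lambda$ is nevertheless still $p$-maximal there, as a discriminant count shows.
\item You never pin down $c$, so the constant $d^{-(2d-1)}$ is not established. You can bypass $\Lambda$: your identity shows that $dN(\mathfrak b)b_j\theta^j=\sum_k\zeta^{-jk}N(\mathfrak b)\alpha_k$ is an algebraic integer, where $b_j$ is the $\theta^j$-coefficient. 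The paper's $D_0/D$ argument then gives $H_K(\alpha)\geq\gamma_j/d$ directly.
\end{itemize}

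\textbf{Why doubling fails.} The $\theta^{2J}$-coefficient of $\alpha^2$ is $b_J^2$, so its $\gamma_{2J}$-coordinate $r'_{2J}$ satisfies $r'_{2J}\gamma_{2J}=(r_J\gamma_J)^2$. The archimedean inequality for $\alpha^2$ is therefore literally the square of the one for $\alpha$. The denominator inequality only yields $H_K(\alpha)^2\gg_d\gamma_{2J}$, and $\gamma_{2J}\leq\gamma_J^2$. Iterating never gets past $H_K(\alpha)\gg_d\gamma_J$. For $J\leq\frac{d-1}{2}$ this is genuinely too weak: for $a=A_1$ it is $A_1^{J/d}\leq A_1^{(d-1)/(2d)}$, against the target $A_1^{(d+1)/(2d)}$. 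No bookkeeping can compensate; you need a different element of the same height.

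\textbf{The missing idea.} It is a one-line degree argument for $\alpha^{-1}$.
\begin{itemize}
\item Write $\alpha=P(\theta)$ and $\alpha^{-1}=Q(\theta)$ with $P,Q\in\IQ[x]$ of degrees $J,J'\leq d-1$.
\item If $J+J'\leq d-1$, then $PQ-1$ has degree less than $d$ and vanishes at $\theta$. Hence $PQ=1$, so $P$ is constant, contradicting $K=\IQ(\alpha)$.
\item So $J+J'\geq d$. Since $d$ is odd, one of $J,J'$ lies in $[\frac{d+1}{2},d-1]$.
\item By the product formula $H_K(\alpha^{-1})=H_K(\alpha)$. Apply your coordinate bound to whichever of $\alpha,\alpha^{-1}$ has its top index in that range, and the proof is finished.
\end{itemize}
This is exactly the reduction the paper makes, citing Dubickas, before isolating the top coefficient $b_m$. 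The pairing that matters is between the top indices of $\alpha$ and $\alpha^{-1}$, not between $\gamma_j$ and $\gamma_{d-j}$.
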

\begin{proof}
For the convenience of the reader we reproduce  Dubickas' proof \cite[4. Proof of Theorem 1]{Dubickas23} with the necessary slight modification in the final step. Let 
$$\alpha=b_0+b_1 a^{1/d}+\cdots +b_{m}a^{m/d}$$ where $m\in \{1,2,\ldots,d-1\}$, $b_0,\ldots, b_m\in \IQ$ and $b_m\neq 0$. Replacing $\alpha$ by
$\alpha^{-1}$ (which does not change the height), we can assume that 
$m\geq (d+1)/2$ (see line after (4.3) in \cite{Dubickas23}). Let $T$ be the leading coefficient of the minimal polynomial of $\alpha$ in $\IZ[x]$ so that for $1\leq j\leq d$ the conjugates $\alpha_j$ satisfy 
$$\alpha_j=\sum_{k=0}^m b_ka^{k/d}\zeta^{(j-1)k}$$
where $\zeta=e^{2\pi i/d}$.  
By \cite[Lemma 7]{Dubickas23} there are $X_1,\ldots,X_{m+1}\in F=\IQ(\zeta)$ with $d^mX_j\in \Oseen_F$, and
\begin{alignat}1\label{eq:Xi}
X_1\alpha_1+\cdots + X_{m+1}\alpha_{m+1}=b_ma^{m/d}
\end{alignat}
and moreover,
\begin{alignat}1\label{ineq:Xi}
|X_j|\leq \frac{1}{(2\sin(\pi/d))^m} \quad (1\leq j\leq m+1).
\end{alignat}
We consider the $X_j$ and the conjugates $\alpha_i$ as complex numbers and $|\cdot|$ denotes the standard absolute value on $\IC$.
Note that $T\alpha_j$ and $d^m X_j$ are algebraic integers and therefore also  
$$d^mT(X_1\alpha_1+\cdots + X_{m+1}\alpha_{m+1})=d^mTb_ma^{m/d}$$
is an algebraic integer. Now $d^mTb_m$ is a non-zero rational number,
say $D_0/D$ with $(D_0,D)=1$ and $D\geq 1$. 

Combining (\ref{eq:Xi}) and (\ref{ineq:Xi}) gives 
$$a^{m/d}/D\leq d^mT|b_m|a^{m/d}\leq \frac{(m+1)d^mT\max_{1\leq j\leq m+1}|\alpha_j|}{(2\sin(\pi/d))^m}.$$

Since the height $H_K(\alpha)$ is equal to the Mahler measure of the minimal polynomial of $\alpha$ (see \cite[Proposition 1.6.6]{BG})
we get
$$H_K(\alpha)=T\prod_{j=1}^d\max\{1,|\alpha_j|\}\geq T\max_{1\leq j\leq m+1}{|\alpha_j|}\geq\frac{(2\sin(\pi/d))^m}{(m+1)d^m}\cdot \frac{a^{m/d}}{D}
> C_d\cdot \frac{a^{m/d}}{D}.$$
Next we claim that 
$$D\leq \prod_{p|a}p^{\lfloor\frac{m\cdot \ord_p(a)}{d}\rfloor}.$$
Now  $D_0 a^{m/d}/D$ and  $a^{(d-m)/d}$ are both algebraic integers.
Therefore, also their product $aD_0/D$ is an algebraic integer. Since $D_0$ and $D$ are coprime it follows that each prime divisor $p$ of $D$ 
must also divide $a$. Taking $d$-th powers and recalling that $D_0a^{m/d}/D$ is an algebraic integer
implies $d \cdot \ord_p(D)\leq m\cdot \ord_p(a)$, proving the claim.

Finally, we note that for $p|A_i$ we have $\ord_p(A_i)=i$, and hence,
$$\frac{a^{m/d}}{D}\geq\prod_{p|a}p^{\frac{m\cdot \ord_p(a)}{d}-\lfloor\frac{m\cdot \ord_p(a)}{d}\rfloor}\geq \prod_{i=1}^{d-1}A_i^{\frac{i\cdot m}{d}-\lfloor \frac{i\cdot m}{d}\rfloor}.$$
Recalling that $(d+1)/2\leq m\leq d-1$ this completes the proof of the lemma.  
\end{proof}

Using (\ref{ineq:Silverman}) to lower bound  $\eta(K)$ and applying Lemma \ref{lem:manygoodprimes} yields (\ref{ineq:SilHB}).
To prove Proposition \ref{prop:HBD} we use (\ref{ineq:Dub}) instead of (\ref{ineq:Silverman}). We apply the stronger form of the Key-Lemma (Lemma \ref{keylemma}), using the invariant $\eta(K)$, 
so that we can replace the hypothesis $\delta<1/(2\ell(d-1))$ by $\delta<\gamma/\ell$ as long as $\del>\D^\gamma$. 

We set 
$$\mathcal{A}:=C_d \min_{\frac{d+1}{2}\leq m\leq d-1}\left(\prod_{i=1}^{d-1}A_i^{\frac{i\cdot m}{d}-\lfloor \frac{i\cdot m}{d}\rfloor}\right).$$
Define $\gamma$ by $\mathcal{A}=\D^\gamma$ so that $\del>\D^\gamma$ by Proposition \ref{prop:Dub}. We can assume $\gamma>1/(2(d-1))$ as we already have (\ref{ineq:SilHB}). Next let $0<\varepsilon<\gamma/\ell$ and set $\delta=\gamma/\ell-\varepsilon$. Applying the Key-Lemma (Lemma \ref{keylemma}) and using Lemma \ref{lem:manygoodprimes} gives
\begin{alignat*}1
\#\Cl_K[\ell]\ll_{\varepsilon,\ell,d}\D^{1/2+\varepsilon}\D^{-(\delta-\varepsilon)}=\D^{1/2+3\varepsilon}\mathcal{A}^{-1/\ell}.
\end{alignat*}
This concludes the proof.

\bibliographystyle{amsplain}
\bibliography{literature}

\providecommand{\bysame}{\leavevmode\hbox to3em{\hrulefill}\thinspace}
\providecommand{\MR}{\relax\ifhmode\unskip\space\fi MR }
\providecommand{\MRhref}[2]{%
  \href{http://www.ams.org/mathscinet-getitem?mr=#1}{#2}
}
\providecommand{\href}[2]{#2}
\begin{thebibliography}{10}

\bibitem{BG}
E.~Bombieri and W.~Gubler, \emph{{H}eights in {D}iophantine {G}eometry},
  Cambridge University Press, 2006.

\bibitem{ChanKoymans25}
S.~Chan and P.~Koymans, \emph{A new pointwise bound for $3$-torsion of class
  groups}, 2025.

\bibitem{Dubickas23}
Art\=uras Dubickas, \emph{Minimal {M}ahler measures for generators of some
  fields}, Rev. Mat. Iberoam. \textbf{39} (2023), no.~1, 269--282. \MR{4571605}

\bibitem{Elllowheight}
J.~Ellenberg, \emph{Points of low height on $\mathbb{P}^1$ over number fields
  and bounds for torsion in class groups}, Computational arithmetic geometry,
  Contemporary Mathematics, vol. 463, Amer. Math. Soc., Providence, RI, 2008,
  45--48.

\bibitem{EllVentorclass}
J.~Ellenberg and A.~Venkatesh, \emph{Reflection principles and bounds for class
  group torsion}, {Int. Math. Res. Not.} \textbf{no.1, Art. ID rnm002} (2007).

\bibitem{elltor1}
C.~Frei and M.~Widmer, \emph{Average bounds for the $\ell$-torsion in class
  groups of cyclic extensions}, Res. Number Theory \textbf{4:34} (2018).

\bibitem{doi:10.1007/s00208-020-02121-2}
\bysame, \emph{Average bounds and higher moments for the $\ell$-torsion in
  class groups}, Math. Ann. \textbf{379} (2021), 1205--1229.

\bibitem{Gerth76}
Frank Gerth, III, \emph{Ranks of 3-class groups of non-{G}alois cubic fields},
  Acta Arith. \textbf{30} (1976), no.~4, 307--322. \MR{422198}

\bibitem{Heath-BrownPierce}
D.~R. Heath-Brown and L.~B. Pierce, \emph{Averages and moments associated to
  class numbers of imaginary quadratic fields}, Compositio Math. \textbf{153
  (11)} (2017), 2287--2309.

\bibitem{HB25}
D.R. Heath-Brown, \emph{{$\ell$-Torsion in Class Groups via Dirichlet
  L-functions}}, arXiv:2412.07701v2 (2025).

\bibitem{KoymansThorner24}
P.~Koymans and J.~Thorner, \emph{Bounds for moments of {$\ell$}-torsion in
  class groups}, Math. Ann. \textbf{390} (2024), 3221--3237.

\bibitem{Lecoanet}
D.~Lecoanet, \emph{{Report on undergraduate research project: elements of low
  height in cubic fields}},  (2008).

\bibitem{8}
D.~Roy and J.~L. Thunder, \emph{A note on {S}iegel's lemma over number fields},
  Monatsh. Math. \textbf{120} (1995), 307--318.

\bibitem{Silverman}
J.~Silverman, \emph{Lower bounds for height functions}, Duke Math. J.
  \textbf{51} (1984), 395--403.

\bibitem{Wang2020}
J.~Wang, \emph{Pointwise bound for $\ell$-torsion in class groups {II}:
  Nilpotent extensions}, 2020.

\bibitem{MR4237969}
\bysame, \emph{Pointwise bound for {$\ell$}-torsion in class groups: elementary
  abelian extensions}, J. Reine Angew. Math. \textbf{773} (2021), 129--151.
  \MR{4237969}

\bibitem{doi:10.1112/blms.12113}
M.~Widmer, \emph{Bounds for the $\ell$-torsion in class groups}, Bull. London
  Math. Soc. \textbf{50} (2018), no.~1, 124--131.

\end{thebibliography}

\end{document}